\documentclass[]{amsart}

\usepackage{amsthm,amsmath,amssymb,mathrsfs}
\usepackage{mathtools,bbm}
\usepackage{graphicx}
\usepackage{float}

\newtheorem{df}{Definition}
\newtheorem{ass}{Assumption}

\newtheorem{lem}{Lemma}
\newtheorem{thm}{Theorem}
\newtheorem{rmk}{Remark}
\newtheorem{cor}{Corollary}

\title{Solution of linear fractional systems\\with variable coefficients}
\author{Ivan Matychyn and Viktoriia Onyshchenko}
\date{}

\begin{document}
\maketitle
		\begin{abstract}
			The paper deals with the initial value problem for linear systems of FDEs with variable coefficients involving Riemann--Liouville and Caputo derivatives. The technique of the generalized Peano--Baker series is used to obtain the state-transition matrix. Explicit solutions are derived both in the homogeneous and inhomogeneous case. The theoretical results are supported by examples.
		\end{abstract}
	
	\section{Introduction}
	
	Explicit solutions to linear systems of differential equations provide basis to solve control problems. Analytical solutions of the linear systems of fractional differential equations with constant coefficients were derived in the papers \cite{ChikEid,ChikMat1} and then applied to solving control problems and differential games in \cite{Matychyn2015Jun,matychyn2018optimal,matychyn2018time,matychyn2019optimal}.
	
	However only a few papers are devoted to solutions of the systems of FDEs with variable coefficients and their control. In \cite{EckertNagatouRey2019} explicit solutions for the linear systems of initialized \cite{lorenzo2000initialized} FDEs are obtained in terms of generalized Peano--Baker series \cite{baake2011peano}.
	
	This paper deals with the initial value problem for linear systems of FDEs with variable coefficients involving Riemann--Liouville and Caputo derivatives. The technique of the generalized Peano--Baker series is used to obtain the state-transition matrix. Explicit solutions are derived both in the homogeneous and inhomogeneous case. The theoretical results are supported by an example.
	
	\section{Preliminaries}
	
	Denote by $ \mathbb{R}^n $ the $ n$-dimensional Euclidean space and by $ I$ some interval of the real line,  $I\subset\mathbb{R}$. 
	In what follows we will assume that $t_0, t\in I$ and $ t>t_0$.
	Suppose $ f:I \to\mathbb{R}^n$ is an absolutely continuous function. Let us recall that the Riemann--Liouville (left-sided) fractional integral and derivative of order $\alpha$, $0<\alpha<1$, are defined as follows:
	\begin{align*}
	\prescript{}{t_0}J_{t}^\alpha f(t) &= \frac{1}{\Gamma(\alpha)}\int_{t_0}^{t} (t-\tau)^{\alpha-1}f(\tau)d\tau,\\
	\prescript{}{t_0}D_{t}^\alpha f(t) &= \frac{d}{dt} \prescript{}{t_0}J_{t}^{1-\alpha} f(t).
	\end{align*}
	
	The Riemann--Liouville fractional derivative of a constant does not equal zero. Moreover, it becomes infinite as $ t$ approaches $t_0$. That is why the regularized Caputo derivative was introduced, which is free of these shortcomings.
	
	The Caputo (regularized) derivative of fractional order $\alpha$, $0<\alpha<1$, can be introduced by the following formula:
	\begin{equation} \label{caputo}
	\prescript{}{t_0}D_{t}^{(\alpha)} f(t)= J_{a+}^{1-\alpha} \frac{d}{dt} f(t),\ t_0,t\in I.
	\end{equation}

	The following properties of the fractional integrals and derivatives \cite{samko1987integrals,kilbas2006theory,podlubny1998fractional,diethelm2010analysis} will be used in the sequel.
	
	\begin{lem} \label{lem:1}
		If $ \alpha, \beta >0$, and $ f(t)$ is such that the derivatives and integrals below exist, the following equalities hold true:
		\begin{align}
		\prescript{}{t_0}D_{t}^\alpha \prescript{}{t_0}J_{t}^\alpha f(t) &= f(t),\\
		\prescript{}{t_0}D_{t}^{(\alpha)} \prescript{}{t_0}J_{t}^\alpha f(t) &= f(t),\\
		\prescript{}{t_0}J_{t}^\alpha \prescript{}{t_0}J_{t}^\beta f(t) &= \prescript{}{t_0}J_{t}^{\alpha + \beta} f(t) \label{semi}.
		\end{align}
		If, moreover, $ \alpha<1 $, then
		\begin{equation}\label{CapRL}
		\prescript{}{t_0}D_{t}^{(\alpha)} f(t) = \prescript{}{t_0}D_{t}^\alpha f(t) - f(t_0)\frac{(t-t_0)^{-\alpha}}{\Gamma(1-\alpha)}.
		\end{equation}
	\end{lem}
	
	\begin{lem}
		For $ \beta >0$ 
		\begin{align}
		\prescript{}{t_0}J_{t}^\alpha {(t-t_0)^{\beta-1}} &= \frac{\Gamma(\beta)}{\Gamma(\beta+\alpha)} (t-t_0)^{\beta + \alpha -1}, \label{intg0}\\
		\prescript{}{t_0}D_{t}^\alpha {(t-t_0)^{\beta-1}} &= \prescript{}{t_0}D_{t}^{(\alpha)} {(t-t_0)^{\beta-1}} = \frac{\Gamma(\beta)}{\Gamma(\beta-\alpha)} (t-t_0)^{\beta - \alpha -1}, \label{diff0}
		\end{align}
	\end{lem}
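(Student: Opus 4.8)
The plan is to establish \eqref{intg0} by a direct computation from the definition of the Riemann--Liouville integral, and then to deduce \eqref{diff0} from it. For the first identity I would write
\[
\prescript{}{t_0}J_{t}^\alpha (t-t_0)^{\beta-1} = \frac{1}{\Gamma(\alpha)}\int_{t_0}^{t} (t-\tau)^{\alpha-1}(\tau-t_0)^{\beta-1}\,d\tau
\]
and perform the substitution $\tau = t_0 + s(t-t_0)$, $s\in[0,1]$, which pulls the power of $(t-t_0)$ out of the integral and leaves $(t-t_0)^{\beta+\alpha-1}\int_0^1 s^{\beta-1}(1-s)^{\alpha-1}\,ds$. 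The remaining integral is the Euler Beta function $B(\beta,\alpha)=\Gamma(\beta)\Gamma(\alpha)/\Gamma(\beta+\alpha)$; the factor $\Gamma(\alpha)$ then cancels and \eqref{intg0} follows. The hypothesis $\beta>0$ is exactly what makes $(\tau-t_0)^{\beta-1}$ integrable near $\tau=t_0$ (integrability near $\tau=t$ is guaranteed by $0<\alpha<1$), so the computation is justified.

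For \eqref{diff0} I would invoke the definition $\prescript{}{t_0}D_{t}^\alpha f(t)=\frac{d}{dt}\,\prescript{}{t_0}J_{t}^{1-\alpha} f(t)$ and apply \eqref{intg0}, now with $1-\alpha$ in place of $\alpha$ (and $0<1-\alpha<1$), to obtain
\[
\prescript{}{t_0}J_{t}^{1-\alpha}(t-t_0)^{\beta-1}=\frac{\Gamma(\beta)}{\Gamma(\beta-\alpha+1)}\,(t-t_0)^{\beta-\alpha}.
\]
Differentiating with respect to $t$ produces a factor $\beta-\alpha$, and the functional equation $\Gamma(\beta-\alpha+1)=(\beta-\alpha)\Gamma(\beta-\alpha)$ rewrites $(\beta-\alpha)/\Gamma(\beta-\alpha+1)$ as $1/\Gamma(\beta-\alpha)$, which is precisely \eqref{diff0}.

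I do not expect a genuine obstacle here; the only point deserving care is the degenerate case in which $\Gamma(\beta-\alpha)$ has a pole. Under the standing assumptions $0<\alpha<1$ and $\beta>0$ one has $\beta-\alpha>-1$, so this occurs only when $\beta=\alpha$, i.e. $\beta-\alpha=0$; then $\prescript{}{t_0}J_{t}^{1-\alpha}(t-t_0)^{\beta-1}$ is constant, its derivative vanishes, and the right-hand side of \eqref{diff0} is to be interpreted as $0$ via the convention $1/\Gamma(0)=0$. With this understanding the two identities hold as stated.
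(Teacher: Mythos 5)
Your proof is correct; the paper itself states this lemma without proof, citing the standard references, and your argument (the Beta-function evaluation of the convolution integral for \eqref{intg0}, followed by applying the definition $\prescript{}{t_0}D_{t}^\alpha=\frac{d}{dt}\prescript{}{t_0}J_{t}^{1-\alpha}$ together with $\Gamma(\beta-\alpha+1)=(\beta-\alpha)\Gamma(\beta-\alpha)$ for \eqref{diff0}) is exactly the standard derivation found there. Your remark on the degenerate case $\beta=\alpha$, interpreted via the convention $1/\Gamma(0)=0$, is also consistent with the paper's identity \eqref{diff}.
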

	
	In particular, from \eqref{intg0}, \eqref{diff0}, it follows that
	\begin{align}
	\prescript{}{t_0}D_{t}^\alpha \frac{(t-t_0)^{\alpha-1}}{\Gamma(\alpha)} = 0, \label{diff} \\
	\prescript{}{t_0}D_{t}^{(\alpha)} 1 = 0, \label{diff1} \\
	\prescript{}{t_0}J_{t}^{1-\alpha} \frac{(t-t_0)^{\alpha-1}}{\Gamma(\alpha)} = 1. \label{intg}
	\end{align}
	
	Let us formulate some preliminary results on Lebesgue integration before we proceed.
	\begin{thm}[\cite{chen2003introduction}] \label{lebesgue}
		Suppose that $ X, Y \subseteq \mathbb{R} $ are intervals. Suppose also that the function $ f : X \times Y \to \mathbb{R} $
		satisfies the following conditions:
		\begin{enumerate}
			\item[(a)] For every fixed $ y \in Y  $, the function  $ f(\cdot, y) $ is measurable on $ X $.
			\item[(b)] The partial derivative $ \frac{\partial}{\partial y}   f (x, y) $ exists for every interior point $(x, y) \in X \times Y$.
			\item[(c)] There exists a non-negative integrable function $ g $ such that  $ \left| \frac{\partial}{\partial y}   f (x, y) \right| \le g(x) $ for every interior  point $(x, y) \in X \times Y$.
			\item[(d)] There exists $ y_0 \in Y $ such that $ f(x, y_0) $ is integrable on $ X $.
		\end{enumerate}
		Then for every $ y \in Y$, the Lebesgue integral 
		\begin{equation*}
		\int_X f(x,y)dx
		\end{equation*}
		exists. Furthermore, the function $ F : Y \to \mathbb{R} $, defined by
		\begin{equation*}
		F(y) = \int_X f(x,y)dx
		\end{equation*}
		for every $ y \in Y $, is differentiable at every interior point of $ Y $, and the derivative $ F (y)$ satisfies
		\begin{equation*}
		F'(y) = \int_X  \frac{\partial}{\partial y}f(x,y)dx.
		\end{equation*}
	\end{thm}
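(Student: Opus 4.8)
The plan is to derive the result from two classical tools: the mean value theorem, which supplies an integrable majorant for both $f(\cdot,y)$ and its difference quotients, and the Lebesgue dominated convergence theorem, which legitimizes passing the limit in the difference quotient of $F$ under the integral sign. Hypotheses (a)--(d) are arranged precisely so that these two tools apply, so the work is in checking their hypotheses rather than in any single clever step.

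First I would establish that $F(y)=\int_X f(x,y)\,dx$ is well defined for \emph{every} $y\in Y$. Fix $y\in Y$ and, for each fixed $x\in X$, apply the mean value theorem to the one-variable function $s\mapsto f(x,s)$ on the segment joining $y_0$ and $y$: there is a point $\xi$ strictly between them with $f(x,y)-f(x,y_0)=\frac{\partial}{\partial y}f(x,\xi)\,(y-y_0)$. Using the bound (c) at the interior point $(x,\xi)$ yields the pointwise estimate $|f(x,y)|\le |f(x,y_0)|+g(x)\,|y-y_0|$. The right-hand side is integrable on $X$ by (d) and (c), while $f(\cdot,y)$ is measurable by (a); hence $f(\cdot,y)\in L^1(X)$ and $F(y)$ exists. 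The same estimate shows the candidate integrand $\frac{\partial}{\partial y}f(\cdot,y)$ is integrable, once we observe it is measurable: it is the pointwise limit, as $h\to 0$, of the difference quotients $\big(f(\cdot,y+h)-f(\cdot,y)\big)/h$, each measurable in $x$ by (a), so the limit is measurable as well.

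Next I would prove differentiability at an arbitrary interior point $y\in Y$. Choose any sequence $h_n\to 0$ with $h_n\ne 0$ and $y+h_n\in Y$; for $n$ large the whole segment between $y$ and $y+h_n$ lies in the interior of $Y$. Writing the difference quotient of $F$ as $\int_X \phi_n\,dx$ with $\phi_n(x)=\big(f(x,y+h_n)-f(x,y)\big)/h_n$, the mean value theorem again gives $\phi_n(x)=\frac{\partial}{\partial y}f(x,\xi_n(x))$ for some $\xi_n(x)$ between $y$ and $y+h_n$, so $|\phi_n(x)|\le g(x)$ by (c). As $n\to\infty$ we have $\xi_n(x)\to y$, and (b) gives $\phi_n(x)\to \frac{\partial}{\partial y}f(x,y)$ for every interior $x$. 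The dominating function $g$ is independent of $n$ and integrable, so the dominated convergence theorem applies and yields $\lim_{n\to\infty}\int_X\phi_n\,dx=\int_X\frac{\partial}{\partial y}f(x,y)\,dx$. Because the sequence $h_n$ was arbitrary, the difference quotient of $F$ converges to this value, i.e.\ $F'(y)=\int_X\frac{\partial}{\partial y}f(x,y)\,dx$.

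I expect the main obstacle to be the bookkeeping that makes the two analytic tools strictly legitimate rather than any deep idea. Concretely: (i) confirming measurability of $\frac{\partial}{\partial y}f(\cdot,y)$ so that the right-hand integral is meaningful, handled above via the limit-of-measurable-functions argument; (ii) verifying the dominated-convergence hypotheses uniformly in $n$, for which the $n$-independent majorant $g$ from (c) is the crucial input; and (iii) the care needed near the boundary of $Y$, where the mean value theorem may only be applied on the open segment of interior points, so the differentiation conclusion is asserted exactly at interior points of $Y$, matching the statement.
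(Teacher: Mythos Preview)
Your argument is the standard and correct proof of differentiation under the integral sign: the mean value theorem furnishes the uniform integrable majorant $g$ for the difference quotients, and dominated convergence then justifies passage to the limit. The bookkeeping points you flag (measurability of $\partial f/\partial y$ as a pointwise limit of measurable functions, and restricting the differentiability conclusion to interior $y$) are handled appropriately.

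There is, however, nothing to compare against in the paper: Theorem~\ref{lebesgue} is quoted from the reference~\cite{chen2003introduction} and stated without proof. The paper uses it only as a black box (together with the chain rule) to derive Corollary~\ref{cor:1}, which in turn feeds into the proof of Lemma~\ref{Podlub}. So your proposal supplies a proof the paper itself omits; it is sound, but strictly speaking there is no ``paper's own proof'' here to set it beside.
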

	\begin{cor} \label{cor:1}
		If $ X=(y_0,y) $,  and hypotheses of Theorem \ref{lebesgue} are fulfilled, the following equality holds true for all $ y\in Y $
		\begin{equation}\label{key}
		\frac{d}{dy} \int_{y_0}^y f(x,y)dx=  \int_{y_0}^y  \frac{\partial}{\partial y}f(x,y)dx + f(y-0,y).
		\end{equation}
	\end{cor}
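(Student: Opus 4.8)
The plan is to split the two ways in which the right-hand integrand of \eqref{key} depends on $y$ --- through the upper limit and through the parameter in $f$ --- and to treat each by a standard tool. Concretely, I would introduce the auxiliary function of two \emph{independent} variables
\[
G(u,v)=\int_{y_0}^{u} f(x,v)\,dx ,
\]
so that $\int_{y_0}^{y} f(x,y)\,dx = G(y,y)$, and then compute $\frac{d}{dy}G(y,y)$ by the chain rule as $\frac{\partial}{\partial u}G(y,y)+\frac{\partial}{\partial v}G(y,y)$.

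For the first partial, hypotheses (a)--(d) of Theorem~\ref{lebesgue} ensure that $x\mapsto f(x,v)$ is integrable on $(y_0,u)$ for every $v$, so the fundamental theorem of calculus for the Lebesgue integral gives $\frac{\partial}{\partial u}G(u,v)=f(u-0,v)$, the one-sided value arising because the upper endpoint is approached from within $(y_0,u)$. For the second partial, I would apply Theorem~\ref{lebesgue} itself, with the interval $X$ there taken to be the subinterval $(y_0,u)$ --- its hypotheses (a)--(d) are inherited verbatim --- to obtain $\frac{\partial}{\partial v}G(u,v)=\int_{y_0}^{u}\frac{\partial}{\partial v}f(x,v)\,dx$. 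Substituting $u=v=y$ into the chain-rule identity then yields exactly \eqref{key}.

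The step I expect to need the most care is the chain rule for $y\mapsto G(y,y)$: the mere existence of the two partials of $G$ does not by itself justify it, one needs $G$ to be genuinely differentiable along the diagonal. This is where hypothesis (c) is indispensable: the bound $|\partial_v f(x,v)|\le g(x)$ makes $v\mapsto G(u,v)$ Lipschitz with constant $\int_{y_0}^{u}g(x)\,dx$, uniformly for $u$ in bounded sets, and together with the continuity in $u$ coming from the first partial this lets one split the difference quotient $\tfrac{1}{h}\bigl(G(y+h,y+h)-G(y,y)\bigr)$ into the two expected limits by a routine $\varepsilon$-argument. Two further points deserve a word: first, $f$ may a priori be defined only for $x\le v$, so $G$ lives only on and below the diagonal and one must either extend $f$ or work with one-sided difference quotients; second, this is also why the boundary term is the \emph{left} limit $f(y-0,y)$ rather than $f(y,y)$ --- in the intended application (Lemma~\ref{Podlub}) the integrand need not be continuous up to the diagonal, and only its left limit there is detected by the differentiation, so one tacitly assumes this limit exists.
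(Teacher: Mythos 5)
Your proof is correct and takes essentially the same route as the paper: the paper's one-sentence argument likewise introduces the auxiliary two-variable function $G(y,\gamma)=\int_{y_0}^{\gamma}f(x,y)\,dx$ and combines Theorem~\ref{lebesgue} (for the parameter derivative) with the fundamental theorem of calculus and the chain rule (for the upper limit), which is exactly your decomposition. Your extra care in justifying the chain rule along the diagonal via the Lipschitz bound furnished by hypothesis (c), and your remark on why the boundary term is the one-sided value $f(y-0,y)$, simply supply details the paper leaves implicit.
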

	This corollary can be obtained by differentiating the function $ G(y,\gamma)= \int_{y_0}^\gamma f(x,y)dx$, then applying Theorem \ref{lebesgue} and the chain rule.

	\section{Homogeneous System of Linear FDEs with Variable Coefficients involving Riemann--Liouville Derivatives}
	Let us consider the following initial value problem:
	\begin{equation} \label{homo}
	\begin{aligned} 
	\prescript{}{t_0}D_{t}^\alpha x(t) &= A(t)x(t),\ t\in \mathring{I},\\
	\prescript{}{t_0}J_{t}^{1-\alpha}x(t)\bigr|_{t=t_0}  &= x_0,
	\end{aligned}
	\end{equation}
	where  the matrix function $ A(t)$ is continuous on $ I$.
	
	\begin{df}
		The state-transition matrix of the system \eqref{homo} is defined as follows:
		\begin{equation}\label{transit}
		\Phi(t,t_0)=\sum_{k=0}^{\infty}\prescript{}{t_0}J_{t}^{k\circ\alpha}A(t),
		\end{equation}
		where 
		\begin{gather*}
		\prescript{}{t_0}J_{t}^{0\circ\alpha}A(t)=\frac{(t-t_0)^{\alpha-1}}{\Gamma(\alpha)}\mathbbm{1},\\
		\prescript{}{t_0}J_{t}^{(k+1)\circ\alpha}A(t)=\prescript{}{t_0}J_{t}^{\alpha}(A(t)\prescript{}{t_0}J_{t}^{k\circ\alpha}A(t)),\ k=0,1,\ldots
		\end{gather*}
		Hereafter $\mathbbm{1}$ stands for an identity matrix.
	\end{df}
	
	We will refer to the series on the right-hand side of \eqref{transit} as the generalized Peano--Baker series \cite{EckertNagatouRey2019,baake2011peano}.
	
	\begin{ass} \label{ass:1}
		The generalized Peano--Baker series in the right-hand side of \eqref{transit} converges uniformly.
	\end{ass}
	

	In view of Lemma \ref{lem:1} and of \eqref{diff}, \eqref{intg}, the following lemma holds true.
	\begin{lem} \label{lem:2}
		Under Assumption \ref{ass:1} the state-transition matrix $ \Phi(t,t_0) $ satisfies the following initial value problem
		\begin{equation*}
		\prescript{}{t_0}D_{t}^\alpha \Phi(t,t_0)= A(t)\Phi(t,t_0),\ \prescript{}{t_0}J_{t}^{1-\alpha}\Phi(t,t_0)\bigr|_{t=t_0}=\mathbbm{1}.
		\end{equation*}
	\end{lem}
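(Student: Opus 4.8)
The plan is to differentiate the generalized Peano--Baker series \eqref{transit} term by term, the interchange of the Riemann--Liouville operators with the infinite sum being licensed by the uniform convergence postulated in Assumption~\ref{ass:1}.

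First I would dispose of the initial condition. Applying $\prescript{}{t_0}J_{t}^{1-\alpha}$ to \eqref{transit} and moving it under the summation sign, the term $k=0$ contributes $\prescript{}{t_0}J_{t}^{1-\alpha}\tfrac{(t-t_0)^{\alpha-1}}{\Gamma(\alpha)}\mathbbm{1}=\mathbbm{1}$ by \eqref{intg}. For $k\ge 1$ the recursion defining $\prescript{}{t_0}J_{t}^{k\circ\alpha}A(t)$ together with the semigroup property \eqref{semi} gives
\[
\prescript{}{t_0}J_{t}^{1-\alpha}\,\prescript{}{t_0}J_{t}^{k\circ\alpha}A(t)=\prescript{}{t_0}J_{t}^{1-\alpha}\,\prescript{}{t_0}J_{t}^{\alpha}\bigl(A(t)\,\prescript{}{t_0}J_{t}^{(k-1)\circ\alpha}A(t)\bigr)=\prescript{}{t_0}J_{t}^{1}\bigl(A(t)\,\prescript{}{t_0}J_{t}^{(k-1)\circ\alpha}A(t)\bigr),
\]
i.e.\ an ordinary integral over $[t_0,t]$ of a function that is integrable near $t_0$ (bounded $A$ times a fractional integral of an $L^1$ function), and such an integral tends to $0$ as $t\to t_0+$. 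Summing, $\prescript{}{t_0}J_{t}^{1-\alpha}\Phi(t,t_0)=\mathbbm{1}+\sum_{k\ge1}\prescript{}{t_0}J_{t}^{1}\bigl(A(t)\,\prescript{}{t_0}J_{t}^{(k-1)\circ\alpha}A(t)\bigr)$, whence its value at $t=t_0$ equals $\mathbbm{1}$.

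For the differential equation I would apply $\frac{d}{dt}$ to this last identity. The constant term disappears (this is just \eqref{diff}), while for $k\ge1$ one has $\frac{d}{dt}\prescript{}{t_0}J_{t}^{1}\bigl(A(t)\,\prescript{}{t_0}J_{t}^{(k-1)\circ\alpha}A(t)\bigr)=A(t)\,\prescript{}{t_0}J_{t}^{(k-1)\circ\alpha}A(t)$; after re-indexing $j=k-1$ the differentiated series becomes $A(t)\sum_{j\ge0}\prescript{}{t_0}J_{t}^{j\circ\alpha}A(t)=A(t)\Phi(t,t_0)$. Equivalently, one may apply $\prescript{}{t_0}D_{t}^\alpha$ directly to \eqref{transit}, using $\prescript{}{t_0}D_{t}^\alpha\prescript{}{t_0}J_{t}^\alpha f=f$ from Lemma~\ref{lem:1} on each term with $k\ge1$ and \eqref{diff} on the term $k=0$; the outcome is the same.

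The step I expect to be delicate is precisely this termwise differentiation, since uniform convergence of $\Phi$ alone does not permit differentiating a series term by term. It is settled by the classical theorem on differentiation of series: the series of derivatives is $\sum_{k\ge1}A(t)\,\prescript{}{t_0}J_{t}^{(k-1)\circ\alpha}A(t)=A(t)\Phi(t,t_0)$, which converges uniformly on compact subsets of $\mathring{I}$ because $A$ is continuous, hence bounded, on the compact interval $I$ and $\sum_{k\ge0}\prescript{}{t_0}J_{t}^{k\circ\alpha}A(t)$ converges uniformly by Assumption~\ref{ass:1}; moreover $\prescript{}{t_0}J_{t}^{1-\alpha}\Phi(t,t_0)$ itself converges, being $\prescript{}{t_0}J_{t}^{1-\alpha}$ applied to a uniformly convergent series. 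Therefore $\frac{d}{dt}\prescript{}{t_0}J_{t}^{1-\alpha}\Phi(t,t_0)=A(t)\Phi(t,t_0)$ on $\mathring{I}$, which together with the initial condition obtained above completes the proof.
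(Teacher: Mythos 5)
Your proof is correct and follows essentially the route the paper intends: the paper offers no detailed argument for Lemma~\ref{lem:2}, merely invoking Lemma~\ref{lem:1} together with \eqref{diff}, \eqref{intg} under Assumption~\ref{ass:1}, which is precisely the termwise application of $\prescript{}{t_0}J_{t}^{1-\alpha}$ and $\prescript{}{t_0}D_{t}^\alpha$ to the series \eqref{transit} that you carry out. Your added justification of the term-by-term differentiation (uniform convergence of the differentiated series via boundedness of $A$ plus Assumption~\ref{ass:1}) fills in a detail the paper leaves implicit.
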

	
	Lemma \ref{lem:2} implies the following
	\begin{thm} \label{thm:homo}
		Under Assumption \ref{ass:1} solution to the initial value problem \eqref{homo} is given by the following expression:
		\begin{equation}\label{homosoln}
		x(t)=\Phi(t,t_0)x_0.
		\end{equation}
	\end{thm}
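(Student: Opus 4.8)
The plan is to verify directly, using Lemma~\ref{lem:2}, that \eqref{homosoln} solves \eqref{homo}, and then to establish uniqueness by recasting the problem as a Volterra integral equation. For the first part, note that $x_0\in\mathbb{R}^n$ is a constant vector and that $\prescript{}{t_0}D_{t}^\alpha$ and $\prescript{}{t_0}J_{t}^{1-\alpha}$ act entrywise and linearly, so they commute with right multiplication by $x_0$. Applying Lemma~\ref{lem:2} then yields, for $t\in\mathring{I}$,
\[
\prescript{}{t_0}D_{t}^\alpha x(t)=\bigl(\prescript{}{t_0}D_{t}^\alpha\Phi(t,t_0)\bigr)x_0=A(t)\Phi(t,t_0)x_0=A(t)x(t),
\]
while at the left endpoint
\[
\prescript{}{t_0}J_{t}^{1-\alpha}x(t)\big|_{t=t_0}=\Bigl(\prescript{}{t_0}J_{t}^{1-\alpha}\Phi(t,t_0)\big|_{t=t_0}\Bigr)x_0=\mathbbm{1}\,x_0=x_0,
\]
so $x(t)=\Phi(t,t_0)x_0$ satisfies both the differential equation and the initial condition. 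This is the bulk of the statement, and it is an immediate consequence of Lemma~\ref{lem:2}.

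For uniqueness I would apply $\prescript{}{t_0}J_{t}^\alpha$ to the equation $\prescript{}{t_0}D_{t}^\alpha x(t)=A(t)x(t)$ and use the standard composition rule $\prescript{}{t_0}J_{t}^\alpha\prescript{}{t_0}D_{t}^\alpha x(t)=x(t)-\frac{(t-t_0)^{\alpha-1}}{\Gamma(\alpha)}\bigl[\prescript{}{t_0}J_{t}^{1-\alpha}x(t)\bigr]_{t=t_0}$ together with the initial condition to obtain the equivalent fixed-point equation
\[
x(t)=\frac{(t-t_0)^{\alpha-1}}{\Gamma(\alpha)}x_0+\prescript{}{t_0}J_{t}^\alpha\bigl(A(t)x(t)\bigr).
\]
If $x$ and $y$ are two solutions, then $z=x-y$ satisfies $z(t)=\prescript{}{t_0}J_{t}^\alpha(A(t)z(t))$; using that $A$ is bounded on the compact interval $I$, that $\|\prescript{}{t_0}J_{t}^\alpha f\|\le\prescript{}{t_0}J_{t}^\alpha\|f\|$, and the semigroup property \eqref{semi}, one iterates this identity to obtain $\|z(t)\|\le M^m\,\prescript{}{t_0}J_{t}^{m\alpha}\|z(t)\|$ for every $m\in\mathbb{N}$. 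Letting $m\to\infty$ and using \eqref{intg0} (the bound $\|z(t)\|\le C(t-t_0)^{\alpha-1}$ near $t_0$ gives $\prescript{}{t_0}J_{t}^{m\alpha}[(t-t_0)^{\alpha-1}]=\frac{\Gamma(\alpha)}{\Gamma((m+1)\alpha)}(t-t_0)^{(m+1)\alpha-1}\to0$ for fixed $t$) forces $z\equiv0$. One also recognizes the partial sums of \eqref{transit} as the Picard iterates of this fixed point, which makes the form $\Phi(t,t_0)x_0$ transparent.

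I expect the existence half to be routine; the main obstacle is uniqueness --- specifically, justifying the Riemann--Liouville composition identity for the class of functions at hand and controlling the $(t-t_0)^{\alpha-1}$ singularity of the solutions at $t_0$ when iterating the integral estimate. This requires fixing precisely the space in which \eqref{homo} is posed (e.g.\ functions $x$ such that $(t-t_0)^{1-\alpha}x(t)$ extends continuously to $I$), after which the argument goes through.
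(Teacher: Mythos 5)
Your verification half is exactly what the paper does: Theorem~\ref{thm:homo} is stated there as an immediate corollary of Lemma~\ref{lem:2} (``Lemma~\ref{lem:2} implies the following''), with no separate proof, and your first paragraph --- commuting $\prescript{}{t_0}D_{t}^\alpha$ and $\prescript{}{t_0}J_{t}^{1-\alpha}$ with right multiplication by the constant vector $x_0$ and invoking the lemma --- is precisely that argument spelled out. Where you go beyond the paper is the uniqueness part: the paper nowhere addresses whether \eqref{homosoln} is the \emph{only} solution, while you recast \eqref{homo} as the Volterra-type fixed-point equation $x(t)=\frac{(t-t_0)^{\alpha-1}}{\Gamma(\alpha)}x_0+\prescript{}{t_0}J_{t}^\alpha\bigl(A(t)x(t)\bigr)$ and kill the difference of two solutions by iterating the integral estimate. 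That sketch is sound in outline: the composition identity you use is the standard Riemann--Liouville one (valid when $\prescript{}{t_0}J_{t}^{1-\alpha}x$ is absolutely continuous), the bound $\|z(t)\|\le C(t-t_0)^{\alpha-1}$ holds once the problem is posed in the weighted space you indicate, and $M^m\Gamma(\alpha)(t-t_0)^{(m+1)\alpha-1}/\Gamma((m+1)\alpha)\to 0$ because the Gamma function in the denominator dominates; the caveats you flag (fixing the solution space, justifying the composition rule there) are exactly the points that would need to be written out, and they are also implicit gaps in the paper's own formulation, which never specifies in what class the solution of \eqref{homo} is meant to be unique. In short: your existence argument coincides with the paper's, and your uniqueness argument supplies something the paper omits, at the cost of the additional functional-analytic bookkeeping you yourself identify.
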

	
	\begin{rmk}
		If $A(t)$ is a constant matrix, i.e. $A(t)\equiv A$, then in view of \eqref{intg0} one gets $$\prescript{}{t_0}J_{t}^{k\circ\alpha}A = \frac{(t-t_0)^{(k+1)\alpha-1}}{\Gamma((k+1)\alpha)}A^k$$ and 
		\begin{equation*}
		\Phi(t,t_0)=e_\alpha^{(t-t_0)A}=t^{\alpha-1}\sum\limits_{k=0}^{\infty} \frac{A^k (t-t_0)^{\alpha k}}{\Gamma[(k+1)\alpha]}=t^{\alpha-1} E_{\alpha,\alpha}(At^\alpha),
		\end{equation*}
		where $ E_{\alpha,\alpha}(At^\alpha) $ is a matrix Mittag-Leffler function and $e_\alpha^{(t-t_0)A}$ is the matrix $\alpha$-exponential function \cite{kilbas2006theory}.
		
		Equation \eqref{homosoln} takes on the form
		\begin{equation*}
		x(t)=e_\alpha^{(t-t_0)A}x_0,
		\end{equation*}
		which is consistent with the formulas, obtained for the systems of fractional differential equations with constant coefficients \cite{kilbas2006theory,Matychyn2015Jun}.
	\end{rmk}
		
	\subsection{Example}
	Let us consider the following system
	\begin{equation}\label{ex:1}
	\begin{gathered}
	\prescript{}{0}D_{t}^\alpha x(t)	=
	\begin{pmatrix}
	0 &t\\
	0 &0
	\end{pmatrix}
	x(t),\\
	\prescript{}{0}J_{t}^{1-\alpha}x(t)\bigr|_{t=0}  = x_0.
	\end{gathered}
	\end{equation}
	Direct calculation yields
	\begin{equation} \label{ex:2RL}
	\Phi(t,0)= \begin{pmatrix}
	\frac{t^{\alpha-1}}{\Gamma(\alpha)} & \frac{\alpha}{\Gamma(2\alpha+1)}t^{2\alpha}\\
	0 & \frac{t^{\alpha-1}}{\Gamma(\alpha)}
	\end{pmatrix}.
	\end{equation}
	It can be readily seen that 
	\begin{align*}
	\prescript{}{0}D_{t}^\alpha \Phi(t,0) &=
	\begin{pmatrix}
	0 & \frac{t^\alpha}{\Gamma(\alpha)}\\
	0 & 0
	\end{pmatrix}
	=A(t)\Phi(t,0),\\
	\prescript{}{0}J_{t}^{1-\alpha}x(t)\bigr|_{t=0} \Phi(t,0) &= 
	\begin{pmatrix}
	1 & \frac{t^{\alpha+1}}{\Gamma(\alpha+1)}\\
	0 & 1
	\end{pmatrix} \Biggr|_{t=0}
	=\mathbbm{1},
	\end{align*}
	hence Lemma \ref{lem:2} holds true.
	
	Suppose that
	\[
	x_0=\begin{pmatrix}1\\ 1\end{pmatrix}.
	\]
	Then, the solution of the initial value problem \eqref{ex:1} can be written down as follows:
	\begin{equation*}
	x(t)=
	\begin{pmatrix}
	\frac{t^{\alpha-1}}{\Gamma(\alpha)} & \frac{\alpha}{\Gamma(2\alpha+1)}t^{2\alpha}\\
	0 & \frac{t^{\alpha-1}}{\Gamma(\alpha)}
	\end{pmatrix}
	\begin{pmatrix}1\\ 1\end{pmatrix} =
		\begin{pmatrix}
	\frac{t^{\alpha-1}}{\Gamma(\alpha)} + \frac{\alpha}{\Gamma(2\alpha+1)}t^{2\alpha}\\
	\frac{t^{\alpha-1}}{\Gamma(\alpha)}
	\end{pmatrix}.
	\end{equation*}

\section{Inhomogeneous system of linear FDEs with variable coefficients involving Riemann--Liouville derivatives}\label{sec:4}

Let us consider partial Riemann-Liouville fractional integral and derivative of order $ \alpha $ ($ 0<\alpha <1$) with respect to $ t$ of a function $ k(t, s) $ of two variables $ (t, s)\in I \times I $, $ k:I\times I \to \mathbb{R} $, defined by
\begin{align}
{_{t_0}^t}J_{t}^\alpha k(t,s) &= \frac{1}{\Gamma(\alpha)}\int_{t_0}^{t} (t-\tau)^{\alpha-1}k(\tau,s)d\tau, \label{part1}\\
{_{t_0}^t}{D}_{t}^\alpha k(t, s) &= \frac{\partial}{\partial t} {_{t_0}^t}J_{t}^{1-\alpha} k(t,s)=  \frac{1}{\Gamma(1-\alpha)} \frac{\partial}{\partial t} \int_{t_0}^{t} (t-\tau)^{-\alpha} k(\tau, s)d\tau. \label{part2}
\end{align}

The following lemma was first formally proposed in \cite{podlubny1998fractional}.
\begin{lem} \label{Podlub}
	Let the function $\varphi(t,s)$, $ \varphi:I\times {I} \to \mathbb{R} $ be such that the following hypotheses are fulfilled
	\begin{enumerate}
		\item[(a)] For every fixed $ t \in I  $, the function  $ \tilde{\varphi}(t,s) = {^t_s}J_{t}^{1-\alpha} \varphi(t,s)$ is measurable on $I $ and integrable on $ I $ w.r.t. $ s $ for some $ t^\ast \in I $.
		\item[(b)] The partial derivative ${^t_s}D_{t}^\alpha {\varphi}(t,s) $ exists for every interior point $ (t, s) \in \mathring{I}\times\mathring{I}$.
		\item[(c)] There exists a non-negative integrable function $ g $ such that  $ \left| {^t_s}D_{t}^\alpha {\varphi}(t,s) \right| \le g(s) $ for every interior  point $(t, s) \in \mathring{I}\times\mathring{I}$.
	\end{enumerate}
	Then
	\begin{equation*}
	{_{t_0}}D_{t}^\alpha \int_{t_0}^t \varphi(t,s)ds = \int_{t_0}^t {^t_s}D_{t}^\alpha \varphi(t,s)ds + \lim\limits_{s \to t-0}{^t_s}J_{t}^{1-\alpha} \varphi(t,s),\ t\in \mathring{I}.
	\end{equation*}
\end{lem}
\begin{proof}
According to \eqref{diff}, we have
\begin{equation*}
{_{t_0}}D_{t}^\alpha \int_{t_0}^t \varphi(t,s)ds = \frac{1}{\Gamma(1-\alpha)} \frac{d}{d t} \int_{t_0}^{t} \frac{d\tau}{(t-\tau)^{-\alpha}}  \int_{t_0}^\tau \varphi(t,s)ds.
\end{equation*}
Now applying consequently Fubini's theorem \cite{chen2003introduction} and Corollary \ref{cor:1}, in view of \eqref{part1}, \eqref{part2}, we obtain
\begin{align*}
{_{t_0}}D_{t}^\alpha \int_{t_0}^t \varphi(t,s)ds &= \frac{1}{\Gamma(1-\alpha)} \frac{d}{d t} \int_{t_0}^{t} \frac{d\tau}{(t-\tau)^{\alpha}}  \int_{t_0}^\tau \varphi(\tau,s)ds\\
&= \frac{1}{\Gamma(1-\alpha)} \frac{d}{d t} \int_{t_0}^{t} ds \int_{s}^{t} \frac{\varphi(\tau,s)d\tau}{(t-\tau)^{\alpha}}\\
&= \frac{d}{d t} \int_{t_0}^{t} {^t_s}J_{t}^{1-\alpha} \varphi(t,s) ds \\
&= \int_{t_0}^t  \frac{\partial}{\partial t}{^t_s}J_{t}^{1-\alpha} \varphi(t,s)ds + \lim\limits_{s\to t-0}{^t_s}J_{t}^{1-\alpha} \varphi(t,s)\\
&= \int_{t_0}^t {^t_s}D_{t}^\alpha {\varphi}(t,s)ds + \lim\limits_{s\to t-0} {^t_s}J_{t}^{1-\alpha} {\varphi}(t,s).
\end{align*}
\end{proof}

Consider the inhomogeneous linear initial value problem
\begin{equation} \label{inhomo}
\begin{aligned} 
{_{t_0}}D_{t}^\alpha x(t) &= A(t)x(t)+u(t),\ t_0,t\in I,\\
{_{t_0}}J_{t}^{1-\alpha}x(t)\bigr|_{t=t_0}  &= x_0
\end{aligned}
\end{equation}
For the sake of simplicity we assume $u:I\to \mathbb{R}^n$ to be continuous on $ I$.

\begin{thm}
	Provided that Assumption 1 is fulfilled, solution to the initial value problem \eqref{inhomo} can be written down as follows:
	\begin{equation} \label{inhomosol}
	x(t) = \Phi(t,t_0)x_0+\int_{t_0}^{t}\Phi(t,\tau) u(\tau) d\tau.	
	\end{equation}
\end{thm}
\begin{proof}
Let us apply the fractional differentiation operator ${_{t_0}}D_{t}^\alpha$ to the both sides of \eqref{inhomosol}. In view of Lemmas \ref{lem:2} and \ref{Podlub} one gets
\begin{equation*}
\begin{aligned}
{_{t_0}}D_{t}^\alpha x(t) &= A(t)\Phi(t,t_0)x_0+\int_{t_0}^{t} {_\tau}D_{t}^\alpha \Phi(t,\tau) u(\tau) d\tau + \lim\limits_{\tau \to t-0}{_\tau}J_{t}^{1-\alpha} \Phi(t,\tau) u(\tau)\\
&=A(t)\Phi(t,t_0)x_0+A(t)\int_{t_0}^{t} \Phi(t,\tau) u(\tau) d\tau + \lim\limits_{\tau \to t-0}{_\tau}J_{t}^{1-\alpha} \Phi(t,\tau) u(\tau)\\
&= A(t)x(t)+\lim\limits_{\tau \to t-0}\left[ \left(\mathbb{I}+ {_\tau}J_{t}^{1-\alpha}\sum_{k=1}^{\infty}{_\tau}J_{t}^{k\circ\alpha}A(t)\right) u(\tau)\right]\\
&= A(t)x(t)+\lim\limits_{\tau \to t-0}\left[ \left(\mathbb{I}+ \sum_{k=1}^{\infty} {_\tau}J_{t}^{1-\alpha} {_\tau}J_{t}^{\alpha} (A(t){_\tau}J_{t}^{(k-1)\circ\alpha}A(t))\right) u(\tau)\right].
\end{aligned}
\end{equation*}
Now taking into account the semigroup property of the fractional integrals \eqref{semi}, one can rewrite the latter expression as follows:
\begin{equation*}
\begin{aligned}
{_{t_0}}D_{t}^\alpha x(t)  
&= A(t)x(t)+\lim\limits_{\tau \to t-0}\left[ \left(\mathbb{I}+ \sum_{k=1}^{\infty} \int_{\tau}^{t} (A(s){_\tau}J_{s}^{(k-1)\circ\alpha}A(s))ds\right) u(\tau)\right]\\
&= A(t)x(t)+u(t),
\end{aligned}
\end{equation*}
which completes the proof.
\end{proof}

\subsection{Example}\label{sec:5}

Let us consider the following system
\begin{equation}\label{ex:1RL}
\begin{gathered}
{_0}D_{t}^\alpha x(t)	=
\begin{pmatrix}
0 &t\\
0 &0
\end{pmatrix}
x(t) + u(t),\\
{_0}J_{t}^{1-\alpha}x(t)\bigr|_{t=0}  = x_0.
\end{gathered}
\end{equation}

Suppose that
\[
x_0=\begin{pmatrix}0\\ 1\end{pmatrix},\quad u(t)\equiv \begin{pmatrix}1\\ 0\end{pmatrix},\ t>0.
\]
Then, in view of \eqref{ex:2RL} the solution of the initial value problem \eqref{ex:1RL} can be written down as follows:
\begin{equation*}
x(t)=
\begin{pmatrix}
\frac{t^{\alpha-1}}{\Gamma(\alpha)} & \frac{\alpha}{\Gamma(2\alpha+1)}t^{2\alpha}\\
0 & \frac{t^{\alpha-1}}{\Gamma(\alpha)}
\end{pmatrix}
\begin{pmatrix}0\\ 1\end{pmatrix} + \int_{0}^{t} 
\begin{pmatrix}
\frac{(t-\tau)^{\alpha-1}}{\Gamma(\alpha)} & \frac{\alpha}{\Gamma(2\alpha+1)}(t-\tau)^{2\alpha}\\
0 & \frac{(t-\tau)^{\alpha-1}}{\Gamma(\alpha)}
\end{pmatrix}
\begin{pmatrix}1\\ 0\end{pmatrix} d\tau.
\end{equation*}
Finally we arrive at the explicit closed-form solution
\begin{align*}
x_1(t) &=\frac{\alpha}{\Gamma(2\alpha+1)}t^{2\alpha} + \frac{t^{\alpha}}{\Gamma(\alpha+1)},\\
x_2(t) &= \frac{t^{\alpha-1}}{\Gamma(\alpha)}.
\end{align*}	

\section{Homogeneous System of Linear FDEs with Variable Coefficients Involving Caputo Derivatives}
We now examine homogeneous linear FDEs with variable coefficients involving Caputo derivatives. Let us consider the following initial value problem:
\begin{equation} \label{homoC}
\begin{aligned} 
\prescript{}{t_0}D_{t}^{(\alpha)} x(t) &= A(t)x(t),\ t\in \mathring{I},\\
x(t_0)  &= \tilde{x}_0,
\end{aligned}
\end{equation}
where  the matrix function $ A(t)$ is continuous on $ I$.

\begin{df}
	The state-transition matrix of the system \eqref{homoC} is defined as follows:
	\begin{equation}\label{transitC}
	\tilde{\Phi}(t,t_0)=\sum_{k=0}^{\infty}\prescript{}{t_0}{\tilde{J}}_{t}^{k\circ\alpha}A(t),
	\end{equation}
	where 
	\begin{gather*}
	\prescript{}{t_0}{\tilde{J}}_{t}^{0\circ\alpha}A(t)=\mathbbm{1},\\
	\prescript{}{t_0}{\tilde{J}}_{t}^{(k+1)\circ\alpha}A(t)=\prescript{}{t_0}J_{t}^{\alpha}(A(t)\prescript{}{t_0}{\tilde{J}}_{t}^{k\circ\alpha}A(t)),\ k=0,1,\ldots
	\end{gather*}
\end{df}

Again, we will refer to the series on the right-hand side of \eqref{transitC} as the generalized Peano--Baker series \cite{EckertNagatouRey2019,baake2011peano}.

\begin{ass} \label{ass:1C}
	The generalized Peano--Baker series in the right-hand side of \eqref{transitC} converges uniformly.
\end{ass}

In view of Lemma \ref{lem:1} and of \eqref{diff}, \eqref{intg}, the following lemma holds true.
\begin{lem} \label{lem:2C}
	Under Assumption \ref{ass:1C} the state-transition matrix $ \tilde{\Phi}(t,t_0) $ satisfies the following initial value problem
	\begin{equation*}
	\prescript{}{t_0}D_{t}^{(\alpha)} \tilde{\Phi}(t,t_0)= A(t)\tilde{\Phi}(t,t_0),\ 
	\tilde{\Phi}(t_0,t_0)=\mathbbm{1}.
	\end{equation*}
\end{lem}

Lemma \ref{lem:2C} implies the following
\begin{thm} \label{thm:homoC}
	Under Assumption \ref{ass:1C} solution to the initial value problem \eqref{homoC} is given by the following expression:
	\begin{equation}\label{homosolnC}
	x(t)=\tilde{\Phi}(t,t_0)\tilde{x}_0.
	\end{equation}
\end{thm}

\begin{rmk}
	If $A(t)$ is a constant matrix, i.e. $A(t)\equiv A$, then in view of \eqref{intg0} one gets $$\prescript{}{t_0}{\tilde{J}}_{t}^{k\circ\alpha}A = \frac{(t-t_0)^{k\alpha}}{\Gamma(k\alpha+1)}A^k$$ and 
	\begin{equation*}
	\tilde{\Phi}(t,t_0)=E_\alpha((t-t_0)^\alpha A)=\sum\limits_{k=0}^{\infty} \frac{A^k (t-t_0)^{\alpha k}}{\Gamma[k\alpha+1]},
	\end{equation*}
	where $E_\alpha(t^\alpha A)= E_{\alpha,1}(t^\alpha A) $.
	
	Equation \eqref{homosolnC} takes on the form
	\begin{equation*}
	x(t)=E_\alpha((t-t_0)^\alpha A)\tilde{x}_0,
	\end{equation*}
	which is consistent with the formulas, obtained for the systems of fractional differential equations with constant coefficients \cite{kilbas2006theory,Matychyn2015Jun}.
\end{rmk}

\subsection{Example}
Let us consider the following system
\begin{equation}\label{ex:2}
\begin{gathered}
\prescript{}{0}D_{t}^{(\alpha)} x(t)	=
\begin{pmatrix}
0 &t\\
0 &0
\end{pmatrix}
x(t),\\
x(0) = \tilde{x}_0.
\end{gathered}
\end{equation}
Direct calculation yields
\begin{equation} \label{ex:2}
\tilde{\Phi}(t,0)= \begin{pmatrix}
1 & \frac{t^{\alpha + 1}}{\Gamma(\alpha+2)}\\
0 & 1
\end{pmatrix}.
\end{equation}
It can be readily seen that 
\begin{align*}
\prescript{}{0}D_{t}^{(\alpha)} \tilde{\Phi}(t,0) &=
\begin{pmatrix}
0 & t\\
0 & 0
\end{pmatrix}
=A(t)\tilde{\Phi}(t,0),\\
\tilde{\Phi}(0,0) &=\mathbbm{1},
\end{align*}
hence Lemma \ref{lem:2C} holds true.

Suppose that
\[
x_0=\begin{pmatrix}1\\ 1\end{pmatrix}.
\]
Then, the solution of the initial value problem \eqref{ex:2} can be written down as follows:
\begin{equation*}
x(t)=
\begin{pmatrix}
1 & \frac{t^{\alpha + 1}}{\Gamma(\alpha+2)}\\
0 & 1
\end{pmatrix}
\begin{pmatrix}1\\ 1\end{pmatrix} =
\begin{pmatrix}
1 + \frac{t^{\alpha + 1}}{\Gamma(\alpha+2)}\\
1
\end{pmatrix}.
\end{equation*}

\section{Inhomogeneous System of Linear FDEs with Variable Coefficients Involving Caputo Derivatives}
Let us first formulate a lemma about Caputo differentiation under the integral sign.


Let us consider partial Caputo derivative of order $ \alpha $ ($ 0<\alpha <1$) with respect to $ t$ of a function $ k(t, s) $ of two variables $ (t, s)\in I \times I $, $ k:I\times I \to \mathbb{R} $, defined by
\begin{equation}
{_{t_0}^t}{D}_{t}^{(\alpha)} k(t, s) =  {_{t_0}^t}J_{t}^{1-\alpha} \frac{\partial}{\partial t} k(t,s)=  \frac{1}{\Gamma(1-\alpha)} \int_{t_0}^{t} (t-\tau)^{-\alpha} \frac{\partial}{\partial \tau} k(\tau, s)d\tau. \label{part2}
\end{equation}

\begin{lem} \label{PodlubC}
	Let the function $\varphi(t,s)$, $ \varphi:I\times {I} \to \mathbb{R} $ satisfy hypotheses of Lemma \ref{Podlub}
	Then
	\begin{equation*}
	{_{t_0}}D_{t}^{(\alpha)} \int_{t_0}^t \varphi(t,s)ds = \int_{t_0}^t {^t_s}D_{t}^{\alpha} \varphi(t,s)ds + \lim\limits_{s \to t-0}{^t_s}J_{t}^{1-\alpha} \varphi(t,s),\ t\in \mathring{I}.
	\end{equation*}
\end{lem}
\begin{proof}
According to \eqref{CapRL}, we have
\begin{equation*}
\begin{aligned}
{_{t_0}}D_{t}^{(\alpha)} \int_{t_0}^t \varphi(t,s)ds &={_{t_0}}D_{t}^{\alpha} \int_{t_0}^t \varphi(t,s)ds - \int_{t_0}^t \varphi(t,s)ds\biggr]_{t=t_0}\frac{(t-t_0)^{-\alpha}}{\Gamma(1-\alpha)}\\
&= {_{t_0}}D_{t}^{\alpha} \int_{t_0}^t \varphi(t,s)ds.
\end{aligned}
\end{equation*}
Thus, the statement of the lemma directly follows from Lemma \ref{Podlub}.
\end{proof}

Consider the inhomogeneous linear initial value problem
\begin{equation} \label{inhomoC}
\begin{aligned} 
{_{t_0}}D_{t}^{(\alpha)} x(t) &= A(t)x(t)+u(t),\ t_0,t\in I,\\
x(t_0) &= x_0
\end{aligned}
\end{equation}
Again, we assume $u:I\to \mathbb{R}^n$ to be continuous on $ I$.

\begin{thm} \label{ThmC}
	Provided that Assumption \ref{ass:1C} is fulfilled, solution to the initial value problem \eqref{inhomoC} can be written down as follows:
	\begin{equation} \label{inhomosolC}
	x(t) = \tilde{\Phi}(t,t_0)x_0+\int_{t_0}^{t} {\Phi}(t,\tau) u(\tau) d\tau.	
	\end{equation}
\end{thm}
\begin{proof}
Let us apply the fractional differentiation operator ${_{t_0}}D_{t}^{(\alpha)}$ to the both sides of \eqref{inhomosolC}. In view of Lemmas \ref{lem:2}, \ref{lem:2C} and \ref{PodlubC} one gets
\begin{equation*}
\begin{aligned}
{_{t_0}}D_{t}^{(\alpha)} x(t) &= A(t)\tilde{\Phi}(t,t_0)x_0+\int_{t_0}^{t} {_\tau}D_{t}^\alpha {\Phi}(t,\tau) u(\tau) d\tau + \lim\limits_{\tau \to t-0}{_\tau}J_{t}^{1-\alpha} {\Phi}(t,\tau) u(\tau)\\
&=A(t)\tilde{\Phi}(t,t_0)x_0+A(t)\int_{t_0}^{t} {\Phi}(t,\tau) u(\tau) d\tau + \lim\limits_{\tau \to t-0}{_\tau}J_{t}^{1-\alpha} {\Phi}(t,\tau) u(\tau)\\
&= A(t)x(t)+\lim\limits_{\tau \to t-0}\left[ \left(\mathbb{I}+ {_\tau}J_{t}^{1-\alpha}\sum_{k=1}^{\infty}{_\tau}{J}_{t}^{k\circ\alpha}A(t)\right) u(\tau)\right]\\
&= A(t)x(t)+\lim\limits_{\tau \to t-0}\left[ \left(\mathbb{I}+ \sum_{k=1}^{\infty} {_\tau}J_{t}^{1-\alpha} {_\tau}J_{t}^{\alpha} (A(t){_\tau}{J}_{t}^{(k-1)\circ\alpha}A(t))\right) u(\tau)\right].
\end{aligned}
\end{equation*}
Now taking into account the semigroup property of the fractional integrals \eqref{semi}, one can rewrite the latter expression as follows:
\begin{equation*}
\begin{aligned}
{_{t_0}}D_{t}^{(\alpha)} x(t)  
&= A(t)x(t)+\lim\limits_{\tau \to t-0}\left[ \left(\mathbb{I}+ \sum_{k=1}^{\infty} \int_{\tau}^{t} (A(s){_\tau}{J}_{s}^{(k-1)\circ\alpha}A(s))ds\right) u(\tau)\right]\\
&= A(t)x(t)+u(t),
\end{aligned}
\end{equation*}
which completes the proof.
\end{proof}

\subsection{Example}\label{sec:5}

Let us consider the following system
\begin{equation}\label{ex:1}
\begin{aligned}
{_0}D_{t}^{(\alpha)} x(t) &=
\begin{pmatrix}
0 &t\\
0 &0
\end{pmatrix}
x(t) + u(t),\\
x(0) &= x_0.
\end{aligned}
\end{equation}

Suppose that
\[
x_0=\begin{pmatrix}0\\ 1\end{pmatrix},\quad u(t)\equiv \begin{pmatrix}1\\ 0\end{pmatrix},\ t>0.
\]
Then, according to Theorem \ref{ThmC} and in view of \eqref{ex:2RL}, \eqref{ex:2}, the solution of the initial value problem \eqref{ex:1} can be written down as follows:
\begin{align*}
x(t) &=
\begin{pmatrix}
1 & \frac{t^{\alpha + 1}}{\Gamma(\alpha+2)}\\
0 & 1
\end{pmatrix}
\begin{pmatrix}0\\ 1\end{pmatrix} + \int_{0}^{t} 
\begin{pmatrix}
\frac{(t-\tau)^{\alpha-1}}{\Gamma(\alpha)} & \frac{\alpha}{\Gamma(2\alpha+1)}(t-\tau)^{2\alpha}\\
0 & \frac{(t-\tau)^{\alpha-1}}{\Gamma(\alpha)}
\end{pmatrix}
\begin{pmatrix}1\\ 0\end{pmatrix} d\tau\\
&=
\begin{pmatrix}
\frac{t^{\alpha+1}}{\Gamma(\alpha+2)}+\frac{t^\alpha}{\Gamma(\alpha+1)}\\
1
\end{pmatrix}.
\end{align*}
Finally we arrive at the explicit closed-form solution
\begin{align*}
x_1(t) &=\frac{t^{\alpha+1}}{\Gamma(\alpha+2)}+\frac{t^\alpha}{\Gamma(\alpha+1)},\\
x_2(t) &= 1.
\end{align*}


\end{document}